\newtheorem{theorem}{Theorem}[section]
\newtheorem{lemma}[theorem]{Lemma}
\theoremstyle{definition}
\newtheorem{corollary}{Corollary}[section]
\theoremstyle{remark}
\newtheorem{remark}[theorem]{Remark}
\numberwithin{equation}{section}
\begin{document}

\title{Improved estimate for the prime counting function}

%    Information for first author
\author{Theophilus Agama}
%    Address of record for the research reported here
\address{Department of Mathematics, African Institute for Mathematical science, Ghana
}
%    Current address
%\curraddr{Department of Mathematics and Statistics,
%{Case Western Reserve University, Cleveland, Ohio 43403}
\email{theophilus@aims.edu.gh/emperordagama@yahoo.com}
%    \thanks will become a 1st page footnote.
%\thanks{The first author was supported in part by NSF Grant \#000000.}

%    Information for second author
%\author{Author Two}
%\address{Mathematical Research Section, School of Mathematical Sciences,
%Australian National University, Canberra ACT 2601, Australia}
%\email{two@maths.univ.edu.au}
%\thanks{Support information for the second author.}

%    General info
\subjclass[2000]{Primary 54C40, 14E20; Secondary 46E25, 20C20}

\date{\today}

\dedicatory{}

\keywords{prime, theta}

\begin{abstract}
 Using some simple combinatorial arguments, we establish some new estimates for the prime counting function and its allied functions. In particular we show that \begin{align}\pi(x)=\Theta(x)+O\bigg(\frac{1}{\log x}\bigg), \nonumber
\end{align}where \begin{align}\Theta(x)=\frac{\theta(x)}{\log x}+\frac{x}{2\log x}-\frac{1}{4}-\frac{\log 2}{\log x}\sum \limits_{\substack{n\leq x\\\Omega(n)=k\\k\geq 2\\2\not| n}} \frac{\log (\frac{x}{n})}{\log 2}.\nonumber
\end{align}This is an improvement to the estimate \begin{align}\pi(x)=\frac{\theta(x)}{\log x}+O\bigg(\frac{x}{\log^2 x}\bigg)\nonumber
\end{align}found in the literature.
\end{abstract}

\maketitle

\section{Introduction}
Let us set $\pi(x)=\sum \limits_{p\leq x}1$, where $\pi(x)$ counts the number of primes no more than a fixed real number. The  irregularity of the primes makes it difficult to obtain an exact pratical formula for the prime counting function. It is one of the well studied functions in number theory and the whole of mathematics given its profound connection to the grand Riemann hypothesis, but not much is known concerning their distribution. In the following sequel, our mission is to improve a known estimate for the prime counting function involving the chebyshev theta  function, given  by \begin{align}\pi(x)=\frac{\theta(x)}{\log x}+O\bigg(\frac{x}{\log^2 x}\bigg). \nonumber
\end{align}

\section{Notations}
Through out this paper a prime number will either be denoted by $p$ or the subscripts of $p$. Any other letter will be clarified. The function $\Omega(n):=\sum \limits_{p||n}1$ counts the number of prime factors of $n$ with multiplicity. The inequality $|k(n)|\leq Mp(n)$ for sufficiently large values of $n$ will be compactly written as $k(n)\ll p(n)$ or $k(n)=O(p(n))$.

\section{Preliminary results}
\begin{lemma}\label{hadarman}
There exist some constant $c>0$ such that \begin{align}\theta(x)=x+O\bigg(\frac{x}{e^{c\sqrt{\log x}}}\bigg).\nonumber
\end{align}
\end{lemma}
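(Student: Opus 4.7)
The plan is to reduce the claim to the prime number theorem with de la Vallée Poussin error term for the Chebyshev function $\psi(x)=\sum_{n\leq x}\Lambda(n)$, and then transfer the estimate from $\psi$ to $\theta$. This is the classical result of Hadamard and de la Vallée Poussin (as the label \texttt{hadarman} hints), so in practice I would simply cite a standard reference; but if pressed to produce the argument, I would proceed analytically via the Riemann zeta function.

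First, I would apply Perron's formula to express $\psi(x)$ as a truncated contour integral
\begin{align*}
\psi(x)=\frac{1}{2\pi i}\int_{c-iT}^{c+iT}\Bigl(-\frac{\zeta'(s)}{\zeta(s)}\Bigr)\frac{x^{s}}{s}\,ds+R(x,T),
\end{align*}
for some fixed $c>1$ and a controllable remainder $R(x,T)$. Then I would shift the contour to the left, picking up the simple pole of $-\zeta'/\zeta$ at $s=1$ (which contributes the main term $x$) while staying inside a region where $\zeta$ has no zeros.

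The crux — and the main obstacle — is Step three: establishing the de la Vallée Poussin zero-free region, namely that there exists $c_{0}>0$ such that $\zeta(s)\ne 0$ for $\sigma\geq 1-c_{0}/\log(|t|+2)$. This rests on the elementary non-negativity $3+4\cos\vartheta+\cos(2\vartheta)\geq 0$ applied to $\log\zeta$ along vertical lines, combined with quantitative bounds for $\zeta$ and $\zeta'$ near $\Re(s)=1$. Once the zero-free region is in hand, I would bound $|\zeta'(s)/\zeta(s)|$ on a contour hugging its boundary and optimise the height $T$ against the depth of the shift; the balanced choice $\log T\asymp\sqrt{\log x}$ produces the characteristic saving $\exp(-c\sqrt{\log x})$, yielding $\psi(x)=x+O\!\bigl(xe^{-c\sqrt{\log x}}\bigr)$.

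Finally, I would pass from $\psi$ to $\theta$ using the elementary identity
\begin{align*}
\psi(x)-\theta(x)=\sum_{k\geq 2}\theta\!\bigl(x^{1/k}\bigr)\ll\sqrt{x}\log x,
\end{align*}
which is trivially absorbed into the $O\!\bigl(x e^{-c\sqrt{\log x}}\bigr)$ term. The obstacles in the rest of the argument (Perron's formula, contour shifting, the final conversion) are essentially routine; all the substance lies in the zero-free region and the optimisation of the contour, so that is where I would focus effort if writing the proof out in full.
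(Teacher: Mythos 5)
Your proposal is correct and is exactly the standard argument underlying the result: the paper itself offers no proof, merely citing Montgomery and Vaughan, and your sketch (Perron's formula, the de la Vall\'ee Poussin zero-free region via $3+4\cos\vartheta+\cos 2\vartheta\geq 0$, the balance $\log T\asymp\sqrt{\log x}$, and the elementary passage from $\psi$ to $\theta$) is precisely the classical proof found in that reference. Nothing further is needed.
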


\begin{proof}
For a proof, see for instance \cite{May}.
\end{proof}

\begin{lemma}
For all $x\geq 2$ \begin{align}\pi(x)=\frac{\theta(x)}{\log x}+\int \limits_{2}^{x}\frac{\theta(t)}{t\log^2 t}dt.\nonumber
\end{align}
\end{lemma}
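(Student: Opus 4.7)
The plan is a standard application of Abel (partial) summation, converting the sum $\pi(x)=\sum_{p\le x}1$ into an expression involving the partial sum function $\theta(x)=\sum_{p\le x}\log p$. The key observation is that
\begin{align}
\pi(x)=\sum_{p\le x}1=\sum_{p\le x}\frac{\log p}{\log p},\nonumber
\end{align}
so $\pi(x)$ can be written as a Riemann--Stieltjes integral
\begin{align}
\pi(x)=\int_{2^{-}}^{x}\frac{d\theta(t)}{\log t},\nonumber
\end{align}
since $\theta$ is a right-continuous step function whose jump at each prime $p$ is exactly $\log p$, and $\theta(t)=0$ for $t<2$.

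Next I would integrate by parts, using that $\frac{d}{dt}\left(\frac{1}{\log t}\right)=-\frac{1}{t\log^{2}t}$ on the interval $(1,\infty)$. This yields
\begin{align}
\int_{2^{-}}^{x}\frac{d\theta(t)}{\log t}=\frac{\theta(t)}{\log t}\bigg|_{2^{-}}^{x}+\int_{2}^{x}\frac{\theta(t)}{t\log^{2}t}\,dt.\nonumber
\end{align}
The boundary contribution at the upper endpoint is $\theta(x)/\log x$, while the contribution at $t=2^{-}$ vanishes because $\theta(t)=0$ for $t<2$. Combining the two identities produces the claimed formula.

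The only subtlety, which I would not call an obstacle so much as a bookkeeping point, is to make sure the lower limit of the Stieltjes integral is taken at $2^{-}$ rather than $2$, so that the unit mass of $\theta$ at the prime $2$ is correctly captured on the left side while disappearing in the boundary evaluation on the right. Once that is handled, no estimation is needed; Lemma~\ref{hadarman} is not invoked, since the identity is exact and holds for every $x\ge 2$.
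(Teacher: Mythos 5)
Your argument is correct: the Riemann--Stieltjes representation $\pi(x)=\int_{2^{-}}^{x}\frac{d\theta(t)}{\log t}$ followed by integration by parts is exactly the standard partial-summation proof of this identity, and your handling of the lower limit at $2^{-}$ (so that the jump of $\theta$ at the prime $2$ is counted while the boundary term there vanishes) is the one point that needs care and you got it right. The paper itself offers no proof, only a citation to Rosser--Schoenfeld, and your derivation is precisely the argument that reference (and any standard text) supplies.
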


\begin{proof}
For a proof see for instance \cite{rosser1962approximate}.
\end{proof}

\begin{lemma}
For all $x\geq 2$ \begin{align}\theta(x)=\pi(x)\log x-\int \limits_{2}^{x}\frac{\pi(t)}{t}dt. \nonumber
\end{align}
\end{lemma}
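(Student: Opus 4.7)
The plan is to prove the identity by Abel (partial) summation applied to the defining sum $\theta(x)=\sum_{p\le x}\log p$. I will treat the primes as the discrete data and $\log t$ as the smooth weight, so that summing by parts converts the sum over primes into the integral against $\pi$.

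Concretely, first I would write $\theta(x)=\sum_{n\le x}a_n\log n$, where $a_n$ is the indicator of the primes, so that its partial sum $A(x):=\sum_{n\le x}a_n$ equals $\pi(x)$. Then I would invoke the standard Abel summation formula: if $f$ is continuously differentiable on $[1,x]$, then
\begin{align}
\sum_{n\le x}a_n f(n)=A(x)f(x)-\int_{1}^{x}A(t)f'(t)\,dt. \nonumber
\end{align}
Specializing to $f(t)=\log t$, so that $f'(t)=1/t$, this yields
\begin{align}
\theta(x)=\pi(x)\log x-\int_{1}^{x}\frac{\pi(t)}{t}\,dt. \nonumber
\end{align}

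Finally I would note that $\pi(t)=0$ for $t<2$, so the integrand vanishes on $[1,2)$ and the lower limit of integration can be shifted from $1$ to $2$ without changing the value, giving exactly
\begin{align}
\theta(x)=\pi(x)\log x-\int_{2}^{x}\frac{\pi(t)}{t}\,dt, \nonumber
\end{align}
as claimed. There is no substantive obstacle here; the only thing to be careful about is the bookkeeping at the lower endpoint (to make sure no boundary term at $t=1$ is dropped), which is handled by the vanishing of $\pi$ below $2$. The argument is essentially the dual of the previous lemma, whose proof inverts the roles of $\theta$ and $\pi$ via the same partial summation mechanism.
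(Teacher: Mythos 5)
Your proof is correct. Note that the paper itself offers no argument for this lemma; it simply cites Rosser--Schoenfeld, so there is nothing to compare against line by line. Your Abel-summation derivation is the standard one and is complete: writing $\theta(x)=\sum_{n\le x}a_n\log n$ with $a_n$ the prime indicator, applying
\begin{align}
\sum_{n\le x}a_n f(n)=A(x)f(x)-\int_{1}^{x}A(t)f'(t)\,dt \nonumber
\end{align}
with $f(t)=\log t$, and then shifting the lower limit from $1$ to $2$ using $\pi(t)=0$ for $t<2$ is exactly right; the potential boundary term at $t=1$ also vanishes since $\log 1=0$. This supplies the argument the paper leaves to the literature, and it is indeed the dual of the companion identity $\pi(x)=\theta(x)/\log x+\int_2^x \theta(t)/(t\log^2 t)\,dt$ obtained by the same mechanism with the roles of the weight and the indicator exchanged.
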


\begin{proof}
For a proof see for instance \cite{rosser1962approximate}.
\end{proof}

\section{Main result}
\begin{theorem}\label{general}
For every positive integer $x$\begin{align}\sum \limits_{\substack{n\leq x\\(2,n)=1}}\left \lfloor \frac{\log (\frac{x}{n})}{\log 2}\right \rfloor=\frac{x-1}{2}+\bigg(1+(-1)^x\bigg)\frac{1}{4}. \nonumber \end{align}
\end{theorem}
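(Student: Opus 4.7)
The plan is to re-interpret the floor function combinatorially and reduce the sum to counting even positive integers up to $x$.

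First I would observe that $\lfloor \log(x/n)/\log 2\rfloor = \lfloor \log_2(x/n)\rfloor$, and that for any odd $n\leq x$, this floor equals the number of positive integers $k\geq 1$ with $2^k\leq x/n$, equivalently $n\cdot 2^k \leq x$. Therefore the whole sum can be rewritten as the cardinality
\begin{align}
\sum_{\substack{n\leq x\\(2,n)=1}}\left\lfloor \frac{\log(x/n)}{\log 2}\right\rfloor = \#\{(n,k): n\text{ odd},\ k\geq 1,\ n\cdot 2^k\leq x\}. \nonumber
\end{align}

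Next I would invoke the fundamental fact that every even positive integer $m$ admits a unique factorization $m=n\cdot 2^k$ with $n$ odd and $k\geq 1$ (the usual ``odd part times power of $2$'' decomposition). This establishes a bijection between pairs $(n,k)$ counted above and even integers $m$ with $2\leq m\leq x$. Hence the sum equals $\lfloor x/2\rfloor$.

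Finally I would verify that $\lfloor x/2\rfloor$ agrees with the claimed closed form by splitting on the parity of $x$. If $x$ is odd, then $\lfloor x/2\rfloor = (x-1)/2$ while $(1+(-1)^x)/4=0$, and the identity is immediate. If $x$ is even, then $\lfloor x/2\rfloor = x/2 = (x-1)/2 + 1/2$ and $(1+(-1)^x)/4 = 1/2$, again matching. No real obstacle arises: the argument is entirely combinatorial, and the only thing to be careful about is the lower bound $k\geq 1$ (so that $n=x$ with $k=0$ is correctly excluded, since $\lfloor \log_2 1\rfloor = 0$) and the parity bookkeeping in the final step.
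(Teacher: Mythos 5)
Your proposal is correct and takes essentially the same route as the paper: both count the even integers up to $x$ in two ways, grouping them by their odd part $n$ so that each odd $n$ contributes exactly $\lfloor \log(x/n)/\log 2\rfloor$ multiples of the form $n\cdot 2^k$. Your version merely makes the underlying bijection (even $m \leftrightarrow$ odd part times a positive power of $2$) explicit and handles the parity bookkeeping in one step via $\lfloor x/2\rfloor$.
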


\begin{proof}
The plan of attack is to examine the distribution of odd natural numbers and even numbers. We break the proof of this result into two cases; The case $x$ is odd and the case $x$ is even. For the case $x$ is odd, we argue as follows:  We first observe that there are as many even numbers as odd numbers less than any given odd number $x$. That is, for $1\leq m < x$, there are $(x-1)/2$ such possibilities. On the other hand, consider the sequence of even numbers less than $x$, given as $2, 2^2, \ldots, 2^b$ such that $2^b < x$; clearly there are $\lfloor \frac{\log x}{\log 2}\rfloor$ such number of terms in the sequence. Again consider those of the form $3\cdot 2, \ldots, 3\cdot 2^b$ such that $3\cdot 2^b <x$; Clearly there are $\left \lfloor \frac{\log (x/3)}{\log 2}\right \rfloor$ such terms in this sequence. We terminate the process by considering those of the form $2\cdot j, \ldots, 2^b\cdot j$ such that $(2,j)=1$ and $2^b\cdot j<x$; Clearly there are $\left \lfloor \frac{\log (x/j)}{\log 2}\right \rfloor$ such number of terms in this sequence. The upshot is that $(x-1)/2=\sum \limits_{\substack{j\leq x\\(2,j)=1}}\left \lfloor \frac{\log (x/j)}{\log 2}\right \rfloor$. We now turn to the case $x$ is even. For the case $x$ is even, we argue as follows:  First we observe that there are $x/2$ even numbers less than or equal to $x$. On the other hand, there are $\sum \limits_{\substack{j\leq x\\(2,j)=1}}\left \lfloor \frac{\log (x/j)}{\log 2}\right \rfloor$ even numbers less than or equal to $x$. This culminates into the assertion that $(x-1)/2+\frac{1}{2}=\sum \limits_{\substack{j\leq x\\(2,j)=1}}\left \lfloor \frac{\log (x/j)}{\log 2}\right \rfloor$. By combining both cases, the result follows immediately.
\end{proof}
\bigskip

\begin{remark}
Understanding the fractional parts of real numbers is an important problem in number theory and gains much paramountcy. The next result gives a little insight into the problem.
\end{remark}

\begin{corollary}\label{fractional part1}
\begin{align}\sum \limits_{n\leq x}\left \{\frac{\log \bigg(\frac{x}{n}\bigg)}{\log 2}\right \}=\frac{x}{\log 2}-x-\frac{\log x}{\log 4}+O(1).\nonumber
\end{align}
\end{corollary}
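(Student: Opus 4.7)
The natural first step is to separate integer and fractional parts. Since $\{y\}=y-\lfloor y\rfloor$, one has
\[
\sum_{n\leq x}\left\{\frac{\log(x/n)}{\log 2}\right\}=\frac{1}{\log 2}\sum_{n\leq x}\log\!\left(\frac{x}{n}\right)-\sum_{n\leq x}\left\lfloor\frac{\log(x/n)}{\log 2}\right\rfloor,
\]
and I would evaluate the two pieces separately.

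For the first piece I would write $\sum_{n\leq x}\log(x/n)=\lfloor x\rfloor\log x-\log(\lfloor x\rfloor!)$ and invoke Stirling's formula $\log(n!)=n\log n-n+\tfrac{1}{2}\log n+O(1)$. Dividing through by $\log 2$ this yields $\frac{x}{\log 2}-\frac{\log x}{\log 4}+O(1)$, which accounts for every term on the right side of the corollary except the ``$-x$'' contribution.

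For the second piece I would swap the order of summation. Since $n\leq x$ makes $\log_2(x/n)\geq 0$, one has $\lfloor\log_2(x/n)\rfloor=\#\{k\geq 1:n\leq x/2^k\}$, whence
\[
\sum_{n\leq x}\left\lfloor\frac{\log(x/n)}{\log 2}\right\rfloor=\sum_{k\geq 1}\lfloor x/2^k\rfloor,
\]
and a geometric-series estimate delivers this quantity as $x+O(\log x)$. Subtracting from the Stirling calculation then furnishes the main terms asserted by the corollary, contributing the missing ``$-x$''.

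The delicate point is to sharpen the floor sum from the trivial $x+O(\log x)$ down to $x+O(1)$ as the statement requires. A dyadic route, writing every $n=2^{a}j$ with $j$ odd so that $\lfloor\log_2(x/n)\rfloor=\lfloor\log_2(x/j)\rfloor-a$, reduces the full sum to an expression in the odd-residue quantity $\sum_{j\leq x,\,(2,j)=1}\lfloor\log_2(x/j)\rfloor$ which is controlled precisely by Theorem~\ref{general}; feeding the identity back into itself across the $\log_2 x$ dyadic scales is presumably the intended path, since the theorem is tailored to odd $j$. The hardest part of the argument will be absorbing the $O(1)$ error produced at each dyadic level without spending a full factor of $\log x$.
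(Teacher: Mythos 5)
Your decomposition $\{y\}=y-\lfloor y\rfloor$, the Stirling evaluation of the first piece, and the identity $\sum_{n\leq x}\lfloor\log_2(x/n)\rfloor=\sum_{k\geq1}\lfloor x/2^k\rfloor$ are all correct, and this is essentially the same route the paper takes (its display \eqref{com} is your identity rearranged). The genuine problem is exactly the step you flag at the end, and the honest conclusion is that it cannot be carried out: the floor sum is not $x+O(1)$. By Legendre's formula for the $2$-adic valuation of $x!$ (for integer $x$),
\begin{align}
\sum_{n\leq x}\left\lfloor\frac{\log(x/n)}{\log 2}\right\rfloor=\sum_{k\geq 1}\left\lfloor\frac{x}{2^k}\right\rfloor=v_2(x!)=x-s_2(x),\nonumber
\end{align}
where $s_2(x)$ is the number of ones in the binary expansion of $x$. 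Taking $x=2^m-1$ gives $s_2(x)=m$, which is about $\log x/\log 2$, so the floor sum equals $x$ minus a quantity as large as a constant times $\log x$ infinitely often. No dyadic bootstrapping of Theorem~\ref{general} can remove this: that theorem controls only the sum over \emph{odd} $n$, which counts each even number $m\leq x$ exactly once (giving $\lfloor x/2\rfloor$), whereas the full sum counts each even $m\leq x$ with multiplicity $v_2(m)$, and the discrepancy between $\sum_{m\leq x}v_2(m)$ and $x$ is precisely the unbounded digit sum $s_2(x)$.

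Consequently the corollary as stated is false; the correct version is
\begin{align}
\sum_{n\leq x}\left\{\frac{\log(x/n)}{\log 2}\right\}=\frac{x}{\log 2}-x-\frac{\log x}{\log 4}+s_2(x)+O(1),\nonumber
\end{align}
so the best unconditional error term available by this (or any) route is $O(\log x)$, attained along $x=2^m-1$. The paper's own proof conceals the issue by asserting \eqref{com} ``from Theorem~\ref{general}'' with no derivation; \eqref{com} is exactly the false claim that the floor sum is $x+O(1)$. So your instinct that the final sharpening is the hard part is right, but the fix is to weaken the error term (or insert the explicit $s_2(x)$), not to find a cleverer argument.
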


\begin{proof}
Stirling's formula \cite{robbins1955remark} gives \begin{align}\sum \limits_{n\leq x}\log n=x\log x-x+\frac{1}{2}\log x+\log(\sqrt{2\pi})+O\bigg(\frac{1}{x}\bigg).\label{son}
\end{align} Also from Theorem \ref{general}, we obtain \begin{align}\sum \limits_{n\leq x}\log n & = x\log x-x\log 2-\log 2\sum \limits_{n\leq x}\left \{\frac{\log \bigg(\frac{x}{n}\bigg)}{\log 2}\right \}+O(1).\label{com}
\end{align}Comparing equation \eqref{son} and \eqref{com}, we have \begin{align}\sum \limits_{n\leq x}\left \{\frac{\log \bigg(\frac{x}{n}\bigg)}{\log 2}\right \}=\frac{x}{\log 2}-x-\frac{\log x}{\log 4}+O(1),\nonumber
\end{align}thereby establishing the estimate.
\end{proof}
\bigskip

The prime counting function is one of the most important functions in number theory, given it's connection with the famous Riemann hypothesis. The prime counting function has been studied by many authors in the past decade. For instance in 2010, Dusart \cite{dusart2010estimates} showed that \begin{align}\pi(x)\leq \frac{x}{\log x}+\frac{x}{\log^2 x}+\frac{2.334x}{\log^3 x}\nonumber
\end{align}holds for every $x\geq 2953652287$ and that \begin{align}\pi(x)\geq \frac{x}{\log x}+\frac{x}{\log^2 x}+\frac{2x}{\log^3 x}\nonumber
\end{align}for every $x\geq 88783$. We now obtain, as an immediate consequence of Theorem \ref{general}, an explicit formula for the prime counting function $\pi(x)$. It relates the prime counting function to the Chebychev theta function $\theta(x)$.
\bigskip

\begin{theorem}\label{genuis}
For all positive integers $x$ \begin{align}\pi(x)=\frac{(x-1)\log (\sqrt{2})+\theta(x)+(\log 2)\bigg(H(x)-G(x)+T(x)\bigg) +\bigg(1+(-1)^x\bigg)\frac{\log 2}{4}}{\log x},\nonumber
\end{align}where \begin{align}H(x):= \sum \limits_{p\leq x}\left \{ \frac{\log (\frac{x}{p})}{\log 2}\right \}, \quad G(x):=\left \lfloor \frac{\log x}{\log 2}\right \rfloor +\sum \limits_{\substack{n\leq x\\\Omega(n)=k\\k\geq 2\\ 2\not|  n}}\left \lfloor \frac{\log (\frac{x}{n})}{\log 2}\right \rfloor, \quad T(x):= \left \lfloor \frac{\log (\frac{x}{2})}{\log 2}\right \rfloor \nonumber
\end{align} \begin{align}\theta(x):=\sum \limits_{p \leq x}\log p, \quad \text{and}\quad \{\cdot \} \nonumber \end{align}denotes the fractional part of any real number.
\end{theorem}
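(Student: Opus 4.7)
The plan is to use Theorem \ref{general} as the master identity and decompose the sum on its left-hand side according to the value of $\Omega(n)$. Writing $S(x):=\sum_{n\leq x,\,(2,n)=1}\lfloor\log(x/n)/\log 2\rfloor$, I would split $S(x)$ into three pieces: the $n=1$ term (which contributes $\lfloor\log x/\log 2\rfloor$), the contribution from odd primes $p\leq x$ (the $n$ with $\Omega(n)=1$ and $2\nmid n$), and the contribution from odd $n$ with $\Omega(n)\geq 2$. By the definition of $G(x)$, the first and third pieces together are exactly $G(x)$, so the identity reduces to
\begin{equation*}
S(x)=G(x)+\sum_{\substack{p\leq x\\ p\neq 2}}\left\lfloor\frac{\log(x/p)}{\log 2}\right\rfloor.
\end{equation*}

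Next I would extract the integer part in the odd-prime sum via $\lfloor y\rfloor=y-\{y\}$. The linear piece produces $\big((\pi(x)-1)\log x-(\theta(x)-\log 2)\big)/\log 2$, which is where $\pi(x)\log x$ and $\theta(x)$ enter. The fractional-part piece is almost $H(x)$, but $H(x)$ includes the prime $p=2$, so I would subtract its contribution $\{\log(x/2)/\log 2\}$ and then rewrite that term as $\log(x/2)/\log 2-T(x)$ by the definition of $T(x)$.

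Substituting these back, the tidy cancellation $\log 2+\log(x/2)=\log x$ collapses the odd-prime sum to $(\pi(x)\log x-\theta(x))/\log 2-H(x)+T(x)$. Plugging this together with the explicit evaluation $S(x)=(x-1)/2+(1+(-1)^x)/4$ coming from Theorem \ref{general}, I would solve for $\pi(x)\log x$, multiply through by $\log 2$, rewrite $\log 2/2$ as $\log\sqrt{2}$, and divide by $\log x$ to reach the stated formula.

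The only real obstacle is the bookkeeping around the single prime $p=2$: $S(x)$ is indexed by odd $n$ only (so $p=2$ does not appear), whereas $H(x)$ is indexed by all primes (so $p=2$ does appear), and the auxiliary $T(x)$ term is present precisely to reconcile this mismatch. Once that discrepancy is handled, the derivation is purely algebraic, and the parity-dependent constant $(1+(-1)^x)/4$ propagates through the identity without requiring any further case analysis.
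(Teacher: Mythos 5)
Your decomposition is essentially the paper's own: split the odd-$n$ sum of Theorem \ref{general} into the $n=1$ term, the odd primes, and the odd $n$ with $\Omega(n)\geq 2$; convert the prime sum into $(\pi(x)\log x-\theta(x))/\log 2$ minus a fractional-part sum; and use $T(x)$ to reconcile the fact that $H(x)$ runs over all primes while the sum coming from Theorem \ref{general} omits $p=2$. The paper performs the identical bookkeeping, just phrased as ``add and subtract the $p=2$ term so the prime sum runs over all $p\leq x$.''

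There is, however, one concrete error: the sign of $T(x)$ in your collapsed odd-prime sum. Since the term you are removing from the full prime sum is exactly $\lfloor\log(x/2)/\log 2\rfloor=T(x)$, you must have
\begin{equation*}
\sum_{\substack{p\leq x\\ p\neq 2}}\left\lfloor\frac{\log(x/p)}{\log 2}\right\rfloor
=\sum_{p\leq x}\left\lfloor\frac{\log(x/p)}{\log 2}\right\rfloor-T(x)
=\frac{\pi(x)\log x-\theta(x)}{\log 2}-H(x)-T(x),
\end{equation*}
with $-T(x)$, not $+T(x)$ as you wrote. Tracing your own steps gives the same thing: the linear piece carries $+\log 2$ in its numerator, the negated fractional piece contributes $+\log(x/2)/\log 2-T(x)$, and after $\log 2+\log(x/2)=\log x$ is absorbed you are left with $-T(x)$. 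Carried forward, your $+T(x)$ would put $-(\log 2)T(x)$ in the final numerator rather than the theorem's $+(\log 2)T(x)$ (the sign flips once more when you solve for $\pi(x)\log x$). A spot check at $x=4$, where $\pi(4)=2$, $T(4)=1$, $G(4)=2$ and $H(4)=\log(4/3)/\log 2$, confirms that only the $+(\log 2)T(x)$ version of the formula returns $\pi(4)=2$. With this sign corrected, your derivation is complete and coincides with the paper's.
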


\begin{proof}
We use Theorem \ref{general} to establish an explicit formula for the prime counting function. Theorem \ref{general} gives $\displaystyle{(x-1)/2=\sum \limits_{\substack{n\leq x\\(2,n)=1}}\left \lfloor \frac{\log (x/n)}{\log 2}\right \rfloor -(1+(-1)^{x})1/4}$, which can then be recast as \begin{align}(x-1)/2&=\sum \limits_{p\leq x}\left \lfloor \frac{\log (x/p)}{\log 2}\right \rfloor -\left \lfloor \frac{\log (x/2)}{\log 2}\right \rfloor +\left \lfloor \frac{\log x}{\log 2}\right \rfloor +\sum \limits_{\substack{n\leq x\\\Omega(n)=k\\k\geq 2\\2\not| n}}\left \lfloor \frac{\log (x/n)}{\log 2}\right \rfloor -\bigg(1+(-1)^x\bigg)/4, \nonumber
\end{align}where $p$ runs over the primes. It follows by futher simplification that \begin{align}(x-1)/2=\frac{1}{\log 2}\bigg(\log x\sum \limits_{p\leq x}1-\sum \limits_{p\leq x}\log p\bigg)-\sum \limits_{p\leq x}\left \{\frac{\log (x/p)}{\log 2}\right \}+\left \lfloor \frac{\log x}{\log 2}\right \rfloor -\left \lfloor \frac{\log (x/2)}{\log 2}\right \rfloor \nonumber
\end{align}\begin{align}+\sum \limits_{\substack{n\leq x\\\Omega(n)=k\\k\geq 2\\2\not| n}}\left \lfloor \frac{\log (x/n)}{\log 2}\right \rfloor -\bigg(1+(-1)^x\bigg)/4. \nonumber \end{align} It follows that\begin{align}(x-1)\log \sqrt{2}=\pi(x)\log x-\theta(x)-(\log 2)\bigg(H(x)-G(x)+T(x)\bigg)-\bigg(1+(-1)^x\bigg)/4, \nonumber
\end{align}where \begin{align}H(x):= \sum \limits_{p\leq x}\left \{ \frac{\log (\frac{x}{p})}{\log 2}\right \}, \quad G(x):=\left \lfloor \frac{\log x}{\log 2}\right \rfloor +\sum \limits_{\substack{n\leq x\\\Omega(n)=k\\k\geq 2\\2\not| n}}\left \lfloor \frac{\log (\frac{x}{n})}{\log 2}\right \rfloor, \quad T(x):= \left \lfloor \frac{\log (\frac{x}{2})}{\log 2}\right \rfloor, \nonumber
\end{align}\begin{align}\text{and} \quad \theta(x):=\sum \limits_{p \leq x}\log p, \nonumber \end{align}and the result follows immediately.
\end{proof}
\bigskip

\begin{remark}
Using Theorem \ref{genuis}, we obtain an estimate for the prime counting and it's allied functions, with very small error term in the following sequel. 
\end{remark}

\begin{theorem}\label{prime counting}
For all positive integers $x\geq 2$\begin{align}\pi(x)=\Theta(x)+O\bigg(\frac{1}{\log x}\bigg), \nonumber
\end{align}where \begin{align}\Theta(x)=\frac{\theta(x)}{\log x}+\frac{x}{2\log x}-\frac{1}{4}-\frac{\log 2}{\log x}\sum \limits_{\substack{n\leq x\\\Omega(n)=k\\k\geq 2\\2\not| n}} \frac{\log (\frac{x}{n})}{\log 2}.\nonumber
\end{align}
\end{theorem}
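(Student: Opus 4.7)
The plan is to derive the claimed estimate directly from the explicit formula in Theorem \ref{genuis} by replacing each floor function with its argument minus a fractional-part residual, and then controlling the aggregate of residuals via Corollary \ref{fractional part1}.

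First, I would simplify $T(x)-G(x)$. Since
\begin{align*}
T(x)=\left\lfloor \frac{\log x-\log 2}{\log 2}\right\rfloor=\left\lfloor \frac{\log x}{\log 2}\right\rfloor-1,
\end{align*}
the $\lfloor \log x/\log 2\rfloor$ summand in $G(x)$ cancels exactly against $T(x)$, leaving
\begin{align*}
T(x)-G(x)=-1-\sum_{\substack{n\leq x\\ \Omega(n)\geq 2\\ 2\nmid n}}\left\lfloor \frac{\log(x/n)}{\log 2}\right\rfloor.
\end{align*}
Writing $\lfloor y\rfloor=y-\{y\}$ splits this into a ``continuous'' part (precisely the sum appearing inside $\Theta(x)$) plus a residual of fractional parts.

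Next, I would combine the residual with $H(x)$. Because $\{\log(x/2)/\log 2\}=\{\log x/\log 2\}$, the prime $p=2$ in $H(x)$ accounts for the $n=1$ slot of the odd-$n$ sum, and every odd integer $n\geq 3$ is either an odd prime or an odd composite. This collapses into the identity
\begin{align*}
H(x)+\sum_{\substack{n\leq x\\ \Omega(n)\geq 2\\ 2\nmid n}}\left\{\frac{\log(x/n)}{\log 2}\right\}=\sum_{\substack{n\leq x\\ 2\nmid n}}\left\{\frac{\log(x/n)}{\log 2}\right\}.
\end{align*}
To evaluate this odd sub-sum I would split every $n\leq x$ into odd $n$ or $n=2m$ with $m\leq x/2$, and note the identity $\log(x/(2m))/\log 2=\log((x/2)/m)/\log 2$. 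Applying Corollary \ref{fractional part1} once at $x$ and once at $x/2$ and subtracting gives
\begin{align*}
\sum_{\substack{n\leq x\\ 2\nmid n}}\left\{\frac{\log(x/n)}{\log 2}\right\}=\frac{x}{2\log 2}-\frac{x}{2}+O(1).
\end{align*}

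Finally, I would substitute back into Theorem \ref{genuis}. The outer factor $\log 2$ multiplying the odd sub-sum cancels the $1/\log 2$ in the main term above, producing the $x/2$ contribution of $\Theta(x)\log x$ together with a cancellation of $(x\log 2)/2$ against the $(x-1)\log\sqrt{2}$ contribution. The odd-composite ``continuous'' sum lands as the final sum of $\Theta(x)\log x$, while the remaining constants (the parity indicator $(1+(-1)^{x})\log 2/4$, the $-\log 2$ from the $T-G$ simplification, and the $-1/4$ in $\Theta(x)$) collapse to a bounded quantity. Hence $\pi(x)\log x-\Theta(x)\log x=O(1)$, and dividing by $\log x$ yields the statement.

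The main obstacle is the error tracking in the third step: the $O(1)$ remainder from Corollary \ref{fractional part1} must survive two applications and a subtraction. This works because the Stirling constant $\log\sqrt{2\pi}$ enters both applications with the same sign and cancels in the difference, so no $\log x$ blow-up appears. A secondary bookkeeping issue is aligning the parity term $(1+(-1)^{x})/4$ arising from Theorem \ref{genuis} against the $-1/4$ in the definition of $\Theta(x)$ and verifying that these, together with the stray $\log\sqrt{2}$ and $\log 2$ constants, merge into the bounded remainder rather than contributing an unbounded oscillation.
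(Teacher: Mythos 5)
Your strategy mirrors the paper's proof of this theorem almost step for step: start from Theorem \ref{genuis}, cancel $\lfloor \log x/\log 2\rfloor$ against $T(x)$, split each floor as value minus fractional part, absorb $H(x)$ into a fractional-part sum over all odd $n\leq x$ (using $\{\log(x/2)/\log 2\}=\{\log x/\log 2\}$ to cover the $n=1$ slot), and evaluate that sum from Corollary \ref{fractional part1}. The one place you genuinely diverge is the evaluation of the odd sub-sum. Your difference of two applications of the corollary, at $x$ and at $x/2$, makes the $\log x$ terms cancel and yields $\frac{x}{2\log 2}-\frac{x}{2}+O(1)$; the paper instead asserts, without showing the computation, the value $\frac{x}{\log 4}-\frac{x}{2}-\frac{\log x}{\log 16}+O(1)$. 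The two differ by $\frac{\log x}{\log 16}$, and it is exactly this term which, after multiplication by $\frac{\log 2}{\log x}$, produces the constant $-\frac{1}{4}$ appearing in $\Theta(x)$.

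This is where your proposal has a genuine gap as a proof of the \emph{stated} theorem. Carrying your value through Theorem \ref{genuis}, the $\frac{x\log 2}{2}$ contributions cancel and you arrive at
\begin{align*}
\pi(x)\log x=\theta(x)+\frac{x}{2}-\log 2\sum_{\substack{n\leq x\\ \Omega(n)\geq 2\\ 2\not|\, n}}\frac{\log(x/n)}{\log 2}+O(1),
\end{align*}
which is $\pi(x)=\Theta(x)+\frac{1}{4}+O\big(\frac{1}{\log x}\big)$, not $\pi(x)=\Theta(x)+O\big(\frac{1}{\log x}\big)$. Your closing claim that the stray constants ``merge into the bounded remainder'' conflates $O(1)$ with $O(1/\log x)$: at the level of $\pi(x)\log x$ the $-\frac{1}{4}$ inside $\Theta(x)$ contributes $-\frac{\log x}{4}$, which is unbounded and has no counterpart anywhere in your computation. (For what it is worth, a direct check of the odd fractional-part sum using Theorem \ref{general} together with Stirling confirms that it carries no $\log x$ term, so your intermediate value is the correct one and the discrepancy originates in the paper's own unproved assertion; but as written your proposal does not establish the theorem as stated, and no bookkeeping of bounded quantities can recover the missing $-\frac{1}{4}$.)
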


\begin{proof}
By Theorem \ref{genuis}, we can write\begin{align}\pi(x)=\bigg(\frac{\log 2}{2}\bigg)\frac{x}{\log x}+\frac{\theta(x)}{\log x}+\frac{\log 2}{\log x}\bigg(H(x)-G(x)+T(x)\bigg)+O\bigg(\frac{1}{\log x}\bigg),\nonumber
\end{align}where \begin{align}H(x):= \sum \limits_{p\leq x}\left \{ \frac{\log (\frac{x}{p})}{\log 2}\right \}, \quad G(x):=\left \lfloor \frac{\log x}{\log 2}\right \rfloor +\sum \limits_{\substack{n\leq x\\\Omega(n)=k\\k\geq 2\\2\not| n}}\left \lfloor \frac{\log (\frac{x}{n})}{\log 2}\right \rfloor, \quad T(x):= \left \lfloor \frac{\log (\frac{x}{2})}{\log 2}\right \rfloor. \nonumber
\end{align}Now, we estimate the term $H(x)-G(x)+T(x)$. Clearly we can write \begin{align}-G(x)+T(x)&=-\sum \limits_{\substack{n\leq x\\\Omega(n)=k\\k\geq 2\\2\not| n}}\left \lfloor \frac{\log (\frac{x}{n})}{\log 2}\right \rfloor+O(1)\nonumber
\end{align}It follows that \begin{align}H(x)-G(x)+T(x)&=\sum \limits_{\substack{n\leq x\\(2,n)=1}}\left \{\frac{\log \bigg(\frac{x}{n}\bigg)}{\log 2}\right \}-\sum \limits_{\substack{n\leq x\\\Omega(n)=k\\k\geq 2\\2\not| n}}\frac{\log (\frac{x}{n})}{\log 2}+O(1)\nonumber \\&=\frac{x}{\log 4}-\frac{x}{2}-\frac{\log x}{\log 16}-\sum \limits_{\substack{n\leq x\\\Omega(n)=k\\k\geq 2\\2\not| n}} \frac{\log (\frac{x}{n})}{\log 2}+O(1),\nonumber
\end{align}where we have used Corollary \ref{fractional part1} and the proof is complete. 
\end{proof}

\begin{corollary}
There exist a contant $c>0$ such that \begin{align}\pi(x)=\nu(x)+O\bigg(\frac{x}{e^{c\sqrt{\log x}}\log x}\bigg),\nonumber
\end{align}where \begin{align}\nu(x):=\frac{3x}{2\log x}-\frac{1}{4}-\frac{\log 2}{\log x}\sum \limits_{\substack{n\leq x\\\Omega(n)=k\\k\geq 2\\2\not| n}}\frac{\log (\frac{x}{n})}{\log 2}.\nonumber
\end{align}
\end{corollary}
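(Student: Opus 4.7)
The plan is to combine Theorem \ref{prime counting} with the Hadamard--de la Vallée Poussin-type estimate in Lemma \ref{hadarman}, and absorb the smaller error term from Theorem \ref{prime counting} into the new, larger error term coming from Lemma \ref{hadarman}.

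Concretely, I would start from the conclusion of Theorem \ref{prime counting}, which states
\begin{align*}
\pi(x)=\frac{\theta(x)}{\log x}+\frac{x}{2\log x}-\frac{1}{4}-\frac{\log 2}{\log x}\sum_{\substack{n\leq x\\\Omega(n)=k\\k\geq 2\\2\nmid n}}\frac{\log(x/n)}{\log 2}+O\!\left(\frac{1}{\log x}\right).
\end{align*}
Next I would substitute the estimate $\theta(x)=x+O\bigl(x/e^{c\sqrt{\log x}}\bigr)$ from Lemma \ref{hadarman} into the first term, which gives
\begin{align*}
\frac{\theta(x)}{\log x}=\frac{x}{\log x}+O\!\left(\frac{x}{e^{c\sqrt{\log x}}\log x}\right).
\end{align*}
Combining $x/\log x$ with the existing $x/(2\log x)$ produces the desired $3x/(2\log x)$, while the remaining terms already match the definition of $\nu(x)$.

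Finally, I would consolidate the error terms. Since $e^{c\sqrt{\log x}}$ grows more slowly than any positive power of $x$, we have $x/e^{c\sqrt{\log x}}\to\infty$, and in particular $1\ll x/e^{c\sqrt{\log x}}$ for all sufficiently large $x$. Hence the $O(1/\log x)$ error from Theorem \ref{prime counting} is dominated by the $O\bigl(x/(e^{c\sqrt{\log x}}\log x)\bigr)$ term introduced by Lemma \ref{hadarman}, and the two errors merge into the single stated bound.

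I do not anticipate any real obstacle: the result is essentially an arithmetic consequence of plugging the prime-number-theorem-with-remainder for $\theta(x)$ into the formula of Theorem \ref{prime counting}. The only point requiring a brief justification is the comparison of the two error magnitudes above, which is immediate from the subexponential growth of $e^{c\sqrt{\log x}}$.
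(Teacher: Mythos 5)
Your proposal is correct and matches the paper's own argument, which likewise obtains the corollary by substituting the estimate $\theta(x)=x+O\bigl(x/e^{c\sqrt{\log x}}\bigr)$ from Lemma \ref{hadarman} into Theorem \ref{prime counting}. You simply spell out the details (merging $x/\log x$ with $x/(2\log x)$ and absorbing the smaller error term) that the paper leaves implicit.
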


\begin{proof}
The result follows by plugging the estimate in Lemma \ref{hadarman} into Theorem \ref{prime counting}.
\end{proof}
\bigskip

It is known that \begin{align}\pi(x)=\frac{\theta(x)}{\log x}+O\bigg(\frac{x}{\log^2 x}\bigg)\nonumber
\end{align}\cite{May}, where $\theta(x):=\sum \limits_{p\leq x}\log p$. The above estimate derived indicates that we can do better than this with \begin{align}\pi(x)=\Theta(x)+O\bigg(\frac{1}{\log x}\bigg),\nonumber
\end{align}where $\Theta(x)$ is some explicit function.
\bigskip

\begin{remark}
It needs to be said that Theorem \ref{prime counting} relates the prime counting function to the Chebyshev theta function. Thus, very good and sharp estimates for $\theta(x)$ will eventually yield a sharp estimate for the prime counting function. For the purpose of applications, we stick to the current estimate involving the Chebyshev theta function.
\end{remark}

\begin{theorem}\label{integral}
For all positive integers $x\geq 2$, we have \begin{align}\int \limits_{2}^{x}\frac{\theta(t)}{t\log ^2 t}dt=R(x)+O\bigg(\frac{1}{\log x}\bigg),\nonumber
\end{align}where \begin{align}R(x)=\frac{x}{2\log x}-\frac{1}{4}-\frac{\log 2}{\log x}\sum \limits_{\substack{n\leq x\\\Omega(n)=k\\k\geq 2\\2\not| n}}\frac{\log (\frac{x}{n})}{\log 2}.\nonumber
\end{align}
\end{theorem}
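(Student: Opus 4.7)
The plan is to combine the second lemma of the preliminaries (the Chebyshev-type identity $\pi(x)=\theta(x)/\log x+\int_2^x \theta(t)/(t\log^2 t)\,dt$) with Theorem \ref{prime counting}, and to read off the value of the integral by subtraction. No new analytic input is required beyond what has already been proved.

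Concretely, I would first isolate the integral in the exact identity, writing
\begin{align*}
\int_2^x \frac{\theta(t)}{t\log^2 t}\,dt = \pi(x)-\frac{\theta(x)}{\log x}.
\end{align*}
Next, I would substitute the expression for $\pi(x)$ supplied by Theorem \ref{prime counting}, namely
\begin{align*}
\pi(x)=\frac{\theta(x)}{\log x}+\frac{x}{2\log x}-\frac{1}{4}-\frac{\log 2}{\log x}\sum_{\substack{n\leq x\\ \Omega(n)=k\\ k\geq 2\\ 2\nmid n}}\frac{\log(x/n)}{\log 2}+O\!\left(\frac{1}{\log x}\right).
\end{align*}
The term $\theta(x)/\log x$ cancels on the right-hand side, leaving precisely $R(x)+O(1/\log x)$, which is the claimed estimate.

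There is no genuine obstacle here; the only things to double-check are that the implied constant in Theorem \ref{prime counting} is absolute (so carrying it through subtraction gives the same $O(1/\log x)$ bound) and that the hypothesis $x\geq 2$ suffices for the integral to make sense (which it does, since the integrand is continuous on $[2,x]$). Consequently the proof reduces to a single line of algebraic manipulation of the two displayed identities.
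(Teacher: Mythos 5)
Your proposal is correct and coincides with the paper's own proof: both subtract $\theta(x)/\log x$ from the Rosser--Schoenfeld identity $\pi(x)=\theta(x)/\log x+\int_2^x \theta(t)/(t\log^2 t)\,dt$ and substitute the expression for $\pi(x)$ from Theorem \ref{prime counting}, noting that $\Theta(x)-\theta(x)/\log x=R(x)$. Nothing further is needed.
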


\begin{proof}
It is known that \begin{align}\pi(x)=\frac{\theta(x)}{\log x}+\int \limits_{2}^{x}\frac{\theta(t)}{t\log^2 t}dt\nonumber
\end{align}for all $x\geq 2$ \cite{rosser1962approximate}. Comparing this result with Theorem \ref{prime counting}, the result follows immediately.
\end{proof}
\bigskip

\begin{theorem}\label{integral 2}
For all positive integers $x\geq 2$, we have \begin{align}\int \limits_{2}^{x}\frac{\pi(t)}{t}dt=\eta(x)+O(1),\nonumber 
\end{align}where \begin{align}\eta(x)=\frac{x}{2}-\frac{\log x}{4}-\log 2\sum \limits_{\substack{n\leq x\\\Omega(n)=k\\k\geq 2\\2\not| n}}\frac{\log (\frac{x}{n})}{\log 2}.\nonumber
\end{align}
\end{theorem}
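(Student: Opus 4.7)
The plan is to combine Lemma~3 of the preliminaries with Theorem~\ref{prime counting}, since the integral in question is essentially the difference $\pi(x)\log x - \theta(x)$, which Theorem~\ref{prime counting} already evaluates (up to a multiplicative factor of $\log x$ and an additive error).

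First, I would invoke Lemma~3, which states
\begin{align*}
\theta(x) = \pi(x)\log x - \int_{2}^{x}\frac{\pi(t)}{t}\,dt,
\end{align*}
and rearrange this to the equivalent identity
\begin{align*}
\int_{2}^{x}\frac{\pi(t)}{t}\,dt = \pi(x)\log x - \theta(x).
\end{align*}
Thus the task reduces to producing an asymptotic for $\pi(x)\log x - \theta(x)$ with an $O(1)$ error term.

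Next, I would multiply both sides of Theorem~\ref{prime counting} by $\log x$. The error term $O(1/\log x)$ scales to $O(1)$, and the leading term $\theta(x)/\log x$ gets cleared of its denominator, yielding
\begin{align*}
\pi(x)\log x = \theta(x) + \frac{x}{2} - \frac{\log x}{4} - \log 2 \sum_{\substack{n\leq x\\ \Omega(n)=k\\ k\geq 2\\ 2\nmid n}}\frac{\log(x/n)}{\log 2} + O(1).
\end{align*}
Subtracting $\theta(x)$ from both sides identifies the right-hand side (minus $\theta(x)$) with $\eta(x) + O(1)$ by the very definition of $\eta(x)$ in the statement.

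Combining the two displays above gives the claimed estimate
\begin{align*}
\int_{2}^{x}\frac{\pi(t)}{t}\,dt = \eta(x) + O(1).
\end{align*}
I do not anticipate a genuine obstacle: the argument is a one-line consequence of the already-established Theorem~\ref{prime counting} together with the integration-by-parts identity of Lemma~3. The only point worth flagging is the bookkeeping on the error term, since one must verify that the $O(1/\log x)$ of Theorem~\ref{prime counting}, after multiplication by $\log x$, is indeed absorbed into $O(1)$, and that no hidden $\log x$ factors survive elsewhere in the expression.
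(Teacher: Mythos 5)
Your proposal is correct and follows exactly the route the paper takes: the paper's own proof simply cites the identity $\theta(x)=\pi(x)\log x-\int_{2}^{x}\frac{\pi(t)}{t}\,dt$ and ``compares'' it with Theorem~\ref{prime counting}, which is precisely the rearrangement and multiplication by $\log x$ that you carry out explicitly. Your bookkeeping of the error term ($O(1/\log x)\cdot\log x=O(1)$) is the only substantive step, and it is done correctly.
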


\begin{proof}
Recall the well-known identity \cite{rosser1962approximate} \begin{align}\theta(x)=\pi(x)\log x-\int \limits_{2}^{x}\frac{\pi(t)}{t}dt. \nonumber
\end{align} By comparing this identity with Theorem \ref{prime counting}, the result follows immediately.
\end{proof}
\bigskip

\begin{remark}
Theorem \ref{integral} measures the discrepancies of the prime counting function and $\theta(x)/\log x$.
\end{remark}
\section{Final remarks}
In this paper we have obtained a new estimate for the prime counting function. We have shown that indeed the estimate \begin{align}\pi(x)=\Theta(x)+O\bigg(\frac{1}{\log x}\bigg),\nonumber
\end{align}where \begin{align}\Theta(x)=\frac{\theta(x)}{\log x}+\frac{x}{2\log x}-\frac{1}{4}-\frac{\log 2}{\log x}\sum \limits_{\substack{n\leq x\\\Omega(n)=k\\k\geq 2\\2\not| n}} \frac{\log (\frac{x}{n})}{\log 2}\nonumber
\end{align}is better than \begin{align}\pi(x)=\frac{\theta(x)}{\log x}+O\bigg(\frac{x}{\log^2 x}\bigg)\nonumber
\end{align}found in the literature.

%% The correct journal style for \specialsection is all uppercase; a known bug
%% in amsart.cls prevents this, so input must be uppercase until it is fixed.
%\specialsection*{This is a Special Section Head}
%\specialsection*{THIS IS A SPECIAL SECTION HEAD}
%This is an example of a special section head%
%%%%%%%%%%%%%%%%%%%%%%%%%%%%%%%%%%%%%%%%%%%%%%%%%%%%%%%%%%%%%%%%%%%%%%%%
\footnote{
\par
.}%
%%%%%%%%%%%%%%%%%%%%%%%%%%%%%%%%%%%%%%%%%%%%%%%%%%%%%%%%%%%%%%%%%%%%%%%%
.

%\begin{table}[ht]
%\caption{}\label{eqtable}
%\renewcommand\arraystretch{1.5}
%\noindent\[
%\begin{array}{|c|c|c|}
%\hline
%&{-\infty}&{+\infty}\\
%\hline
%{f_+(x,k)}&e^{\sqrt{-1}kx}+s_{12}(k)e^{-\sqrt{-1}kx}&s_{11}(k)e^
%{\sqrt{-1}kx}\\
%\hline
%{f_-(x,k)}&s_{22}(k)e^{-\sqrt{-1}kx}&e^{-\sqrt{-1}kx}+s_{21}(k)e^{\sqrt
%{-1}kx}\\
%\hline
%\end{array}
%\]
%\end{table}

%\begin{figure}[tb]
%\blankbox{.6\columnwidth}{5pc}
%\includegraphics{lion.png}
%\caption{This is an example of a figure caption with text.}
%\label{firstfig}
%\end{figure}

%\begin{figure}[tb]
%\blankbox{.75\columnwidth}{3pc}
%\includegraphics{lion.png}
%\caption{}\label{otherfig}
%\end{figure}

\bibliographystyle{amsplain}

\end{document}